\documentclass[11pt,a4paper]{amsart}

\usepackage[all]{xy}
\usepackage{amssymb}

 \theoremstyle{plain}

\newtheorem{teo}{Theorem}
\newtheorem{lem}[teo]{Lemma}
\newtheorem{cor}[teo]{Corollary}

\newtheorem*{thmA}{Theorem A}
\newtheorem*{thmB}{Theorem B}
\newtheorem*{thmC}{Theorem C}

\theoremstyle{definition}

\newtheorem{eje}{Example}

\newcommand{\argu}{\hbox to 7truept{\hrulefill}}

\DeclareMathOperator{\Zen}{Z}

\newcommand{\N}{\mathbb{N}}
\newcommand{\Z}{\mathbb{Z}}
\newcommand{\Q}{\mathbb{Q}}

\newcommand{\Span}{\text{Span}}

\keywords{Powerful $p$-groups, uniform pro-$p$ groups}

\subjclass{20}

\begin{document}

\title{A characterization of powerful $p$-groups}

\author{Jon Gonz\'alez-S\'anchez}

\address{
Departamento de Matem\'aticas\\
   Facultad de Ciencia y Tecnolog\'{\i}a\\
   Universidad del Pa\'is Vasco - Euskal Herriko Unibertsitatea UPV/EHU\\
   Spain}
\email{jon.gonzalez@ehu.es}
\author{Amaia Zugadi-Reizabal}

\address{
Departamento de Did\'actica de las Matem\'aticas y de las Ciencias Experimentales\\
   Escuela Universitaria de Magisterio de Bilbao\\
   Universidad del Pa\'is Vasco - Euskal Herriko Unibertsitatea UPV/EHU\\
   Spain}
\email{amaia.zugadi@ehu.es}

\thanks{The authors were supported by IKERTALDE programme of the Basque Government, grant IT753-13.  J. Gonz\'alez-S\'anchez was supported by grant MTM2011-28229-
C02-01; and  A. Zugadi-Reizabal by grant MTM2011-28229-C02-02 from the Spanish Ministry of Economy and Competitivity. J. Gonz\'alez-S\'anchez also acknowledges support through the Ram\'on y Cajal Programme of the Spanish Ministry of Science and Innovation. }

\begin{abstract}
In \cite{KS} Benjamin Klopsch and Ilir Snopce posted the conjecture that for $p\geq 3$ and $G$ a torsion-free pro-$p$ group $d(G)=\dim (G)$ is a sufficient and necessary condition for the pro-$p$ group $G$ to be uniform. They  pointed out that this follows from the more general question of whether for a finite $p$-group $d(G)=\log_p(|\Omega_1(G)|)$ is a sufficient and necessary condition for the group $G$ to be powerful. In this short note we will give a positive answer to this question for $p\geq 5$.
\end{abstract}

\maketitle

\section{Introduction}

We say that a finite $p$-group is powerful if $[G,G]\leq G^p$ if $p$ is odd or $[G,G]\leq G^4$ if $p=2$. Powerful $p$-groups were first introduced by Lubotzky and Mann in \cite{LuMa} and they have played a prominent role in studying finite $p$-groups and compact $p$-adic analytic groups. By a result of Lazard, a compact topological group $G$ has an analytic structure over $\Q_p$ if and only if it has a finite index subgroup $U$ which is finitely  generated, powerful and torsion-free \cite{DDMS}. A finitely generated torsion-free powerful pro-$p$ group is what we call a uniform group. A uniform group has a natural Lie algebra structure which is given by the inversion of the Baker-Campbell-Hausdorff formula and the additive structure of the Lie algebra defines the structure of $\Q_p$-variety.

In \cite{KS} Klopsch and Snopce have conjectured that for an odd prime $p$ a torsion-free pro-$p$ group is uniform if and only if the minimal number of generators and the dimension coincide. They proved this conjecture in the special case when the group is solvable leaving the general case open. They also pointed out that this conjecture follows from the more general question of whether for a finite $p$-group $G$ and $p$ an odd prime $d(G)=\log_p(|\Omega_1(G)|)$ is a sufficient and necessary condition for the group $G$ to be powerful. This question turned out to be false for the prime $p=3$ (see Example \ref{ejemplo}). The aim of this paper is to give a positive answer to this question for $p\geq 5$, and provide a counterexample for the prime $p=3$.

\begin{thmA}
Let $p\geq 5$ and let $G$ be a finite $p$-group . Then the following conditions are equivalent:
\begin{enumerate}
\item $G$ is powerful,
\item $d(G)=\log_p (|\Omega_1(G)|)$.
\end{enumerate}
\end{thmA}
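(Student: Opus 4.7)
The direction (1)$\Rightarrow$(2) is classical. In a powerful $p$-group one has $\Phi(G)=G^p$, the $p$-power map induces a surjection $G/G^p \twoheadrightarrow G^p/G^{p^2}$, and iteration yields $|G^{p^{i-1}}/G^{p^i}| \leq p^{d(G)}$; combined with the standard identity $|\Omega_i(G)/\Omega_{i-1}(G)|=|G^{p^{i-1}}/G^{p^i}|$ valid for powerful groups (see \cite{DDMS}, Chapter~2), one obtains in particular $|\Omega_1(G)|=|G:G^p|=p^{d(G)}$.

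For (2)$\Rightarrow$(1) the plan is a minimal-counterexample argument combined with Lie-theoretic tools that are available only for $p \geq 5$. Let $G$ be a $p$-group of smallest order with $|\Omega_1(G)|=p^{d(G)}$ but $[G,G]\not\le G^p$. I would first show that hypothesis (2) is inherited by those quotients $G/N$ for which $N\cap\Omega_1(G)=1$, and use minimality to constrain the normal-subgroup structure of $G$, reducing to nilpotency class at most $p-1$ so that Laffey's theorem guarantees $\Omega_1(G)$ is an elementary abelian subgroup. One then compares
\[
|\Omega_1(G)|=p^{d(G)} \quad \text{with} \quad |G:G^p|=p^{d(G)}\cdot|[G,G]G^p/G^p|;
\]
the assumption $[G,G]\not\le G^p$ must produce extra elements of order $p$ — arising from $p$-th-power preimages of non-trivial elements of $[G,G]G^p/G^p$ — and thus force $|\Omega_1(G)|>p^{d(G)}$, contradicting (2).

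The main technical ingredient driving the last step is the Hall--Petrescu formula
\[
(xy)^p \equiv x^p y^p \pmod{\gamma_2(G)^p \gamma_p(G)},
\]
together with its higher-weight analogues, which express the $p$-power map as a homomorphism up to commutator corrections weighted by the binomials $\binom{p}{i}$ for $2\leq i\leq p-1$. The main obstacle, and precisely the reason the statement genuinely fails at $p=3$ (cf.\ Example~\ref{ejemplo}), is the weight-$3$ correction: for $p=3$ the coefficient $\binom{3}{3}=1$ is a unit modulo $3$, so iterated commutators survive in cubes and sabotage the count. For $p \geq 5$ the analogous corrections at weights $3$ and $4$ carry coefficients divisible by $p$, so on the associated graded Lie algebra the $p$-power operation is $\F_p$-linear on enough graded pieces to convert the equality $|\Omega_1(G)|=p^{d(G)}$ into the inclusion $[G,G] \leq G^p$. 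Carrying out this Lie-theoretic bookkeeping cleanly, and verifying that the class reduction in the minimal counterexample indeed lands within the range where the Lazard/Hall--Petrescu machinery applies, is the heart of the argument.
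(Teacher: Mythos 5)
There is a genuine gap in the direction (2)$\Rightarrow$(1), and it sits exactly where the paper's key idea lives. Your minimal-counterexample-by-order scheme does not get off the ground. First, hypothesis (2) is an \emph{equality}, and for a quotient $G/N$ with $N\cap\Omega_1(G)=1$ you only obtain $\log_p|\Omega_1(G/N)|\ge\log_p|\Omega_1(G)|=d(G)\ge d(G/N)$, which is not hypothesis (2) for $G/N$ (there is no a priori inequality $d(H)\le\log_p|\Omega_1(H)|$ to close the loop). Second, the quotients you would need in order to reduce the nilpotency class are by terms of the lower central series, and these always meet $\Omega_1(G)$ nontrivially (the last nontrivial term is central and contains elements of order $p$), so the reduction to class at most $p-1$ is unjustified. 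Third, your final count implicitly uses $|\{g\in G: g^p=1\}|=|G:G^p|$, which is a property of \emph{regular} groups --- precisely what the unproved class bound was supposed to deliver for $G$ itself. (Also, class $<p$ gives that $\Omega_1$ has exponent $p$, not that it is elementary abelian; the extraspecial group of exponent $p$ already violates your claim.) Finally, your diagnosis of the failure at $p=3$ via the coefficient $\binom{3}{3}$ is not the actual obstruction.

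The paper closes these gaps by working with a minimal \emph{subgroup} rather than a minimal quotient or a smallest counterexample: it takes $K\le G$ minimal subject to $|K:K^p\gamma_3(K)|\ge|G:G^p\gamma_3(G)|$, so that $K$ is $\omega$-maximal for the word $\omega=x^p[y_1,y_2,y_3]$. Since this word is interchangeable (Lemma~\ref{swap}), the replacement theorem (Theorem~\ref{thm:thompson}) yields $\omega(K)\le \Zen(K)$, hence $K$ has class at most $3\le p-1$, is regular, and satisfies $|K:K^p|=|\Omega_{\{1\}}(K)|$. The chain $|G:G^p\gamma_3(G)|\le|K:K^p\gamma_3(K)|\le|K:K^p|=|\Omega_{\{1\}}(K)|\le|\Omega_{\{1\}}(G)|=|G:G^p\gamma_2(G)|\le|G:G^p\gamma_3(G)|$ then forces $G^p\gamma_2(G)=G^p\gamma_3(G)$ and so $[G,G]\le G^p$. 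This $\omega$-maximality/interchangeability mechanism is the missing ingredient in your proposal; it is also what explains the hypothesis $p\ge5$: one needs $k=2\le p-2$ so that the class bound $k+1\le p-1$ lands in the regular range, which fails precisely at $p=3$ (cf.\ Example~\ref{ejemplo}), independently of any Hall--Petrescu bookkeeping.
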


In a finite $p$-group $G$ the minimal number of generators coincides with $\log_p(|G:G^p[G,G]|)$. Therefore one can refrase Theorem A by saying that $|\Omega_1(G)|=|G:G^p[G,G]|$ is a necessary and sufficient condition for a $p$-group $G$ to be powerful. In \cite{HL} Hethelyi an Levai proved that for a powerful $p$-group $G$, one has $|G:G^p|=|\{g\in G\mid g^p=1\}|$, therefore this result can be seen as a converse of this fact. Theorem B proves Theorem A and it also includes a weaker version for the prime $p=3$.
The main ingredient for proving Theorem B are the so-called $\omega$-maximal groups, which were introduced in \cite{GK} by Gonz\'alez-S\'anchez and Klopsch.

\begin{thmB}
Let $p$ be an odd prime, $G$ a finite $p$-group and let $k\leq p-2$ and $i\geq 1$ or $k=p-1$ and $i\geq 2$. Then the following conditions are equivalent:
\begin{enumerate}
\item $\gamma_k(G)\leq G^{p^i}$,
\item $|G:G^{p^i}\gamma_k(G)|=|\Omega_{\{ i\}}(G)|$.
\end{enumerate}
\end{thmB}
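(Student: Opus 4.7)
The plan is to use the $\omega$-maximal subgroups of \cite{GK} as the central tool. Recall that an $\omega$-maximal subgroup of $G$ (at level $i$) is a subgroup $M\leq G$ with $M^{p^i}=1$ of maximal possible order; in particular $M\subseteq\Omega_{\{i\}}(G)$. The structural input I would draw from \cite{GK}, valid precisely in the numerical range of the theorem, is the factorisation $G=M\cdot G^{p^i}\gamma_k(G)$, equivalent to the surjectivity of the composition $M\hookrightarrow G\twoheadrightarrow G/G^{p^i}\gamma_k(G)$. This yields the universal inequality
$$|\Omega_{\{i\}}(G)|\ \geq\ |M|\ =\ |G:G^{p^i}\gamma_k(G)|\cdot|M\cap G^{p^i}\gamma_k(G)|\ \geq\ |G:G^{p^i}\gamma_k(G)|,$$
so the theorem is reduced to analysing when equality holds throughout this chain.

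For the implication (1)$\Rightarrow$(2), I would assume $\gamma_k(G)\leq G^{p^i}$, so that $G^{p^i}\gamma_k(G)=G^{p^i}$ and $\bar G:=G/G^{p^i}$ has nilpotency class strictly less than $k$ and exponent dividing $p^i$. The numerical bound $k\leq p-1$, reinforced by $i\geq 2$ when $k=p-1$, places $\bar G$ in the range where Lazard's correspondence (or the theory of regular $p$-groups) gives tight control over the $p^i$-th power map: every coset of $G^{p^i}$ admits a representative $g$ with $g^{p^i}=1$. Together with the universal inequality above, this lifting forces $|\Omega_{\{i\}}(G)|=|G:G^{p^i}|$, giving~(2).

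For the implication (2)$\Rightarrow$(1), the assumed equality forces $M\cap G^{p^i}\gamma_k(G)=1$ in the displayed chain, so $M$ is a complement to $G^{p^i}\gamma_k(G)$ in $G$ and $M=\Omega_{\{i\}}(G)$ as a set; in particular $\Omega_{\{i\}}(G)$ is itself a subgroup of exponent dividing $p^i$. I would then argue by induction on $|G|$: in a minimal counterexample with $\gamma_k(G)\not\leq G^{p^i}$, pick a central element $z\in\Omega_1(Z(G))\cap\gamma_k(G)$ not lying in $G^{p^i}$ and pass to $G/\langle z\rangle$. The numerical hypotheses are inherited by the quotient, and the inductive hypothesis, combined with the structure of the $\omega$-maximal subgroup of $G/\langle z\rangle$, should contradict the complement structure found in $G$.

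The step I expect to be the main obstacle is this last one: translating the clean conclusion \emph{``$\Omega_{\{i\}}(G)$ is a subgroup complementing $G^{p^i}\gamma_k(G)$''} into the containment $\gamma_k(G)\leq G^{p^i}$. A complement of exponent $p^i$ does not, by itself, forbid non-trivial $k$-fold commutators from lying outside $G^{p^i}$; ruling them out requires playing the regularity of $G/\gamma_k(G)$ (guaranteed by $k\leq p-1$) against the $\omega$-maximality of $M$, and it is precisely here that the dichotomy between $k\leq p-2$ and the boundary case $k=p-1$, $i\geq 2$ must be handled with care, mirroring the split in the hypotheses of the theorem.
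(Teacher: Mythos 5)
There is a genuine gap, and it begins with your very first step. The notion of ``$\omega$-maximal'' in \cite{GK} (and in this paper) is not the one you use: a finite group is $\omega$-maximal when $|H:\omega(H)|<|G:\omega(G)|$ for every \emph{proper subgroup} $H$; it is a property of the group itself, not the selection of a distinguished subgroup $M$ with $M^{p^i}=1$ of maximal order. Consequently the factorisation $G=M\cdot G^{p^i}\gamma_k(G)$ that you present as the ``structural input from \cite{GK}'' is not a result of that paper and is nowhere justified; since your universal inequality and the whole reduction rest on it, the argument has no foundation. Even granting that factorisation, you explicitly leave the decisive implication (2)$\Rightarrow$(1) unfinished: the proposed induction on $|G|$ via a central element $z\in\Omega_1(Z(G))\cap\gamma_k(G)\setminus G^{p^i}$ is only a plan --- it is not clear that hypothesis (2) passes to $G/\langle z\rangle$ (the set $\Omega_{\{i\}}$ behaves unpredictably under quotients), and no contradiction is actually derived. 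You correctly sense that this is ``the main obstacle,'' but that obstacle is the entire content of the theorem.

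The paper's mechanism is different and is exactly the missing idea. For (2)$\Rightarrow$(1) with $k\le p-2$ one works with the word $\omega=x^{p^i}[y_1,\ldots,y_{k+1}]$ --- note the shift from $k$ to $k+1$ --- and chooses $K\le G$ minimal subject to $|K:K^{p^i}\gamma_{k+1}(K)|\ge |G:G^{p^i}\gamma_{k+1}(G)|$. Such a $K$ is $\omega$-maximal in the sense above; since $\omega$ is interchangeable (Lemma \ref{swap}), Theorem \ref{thm:thompson} gives $\omega(K)\le Z(K)$, so $K$ has nilpotency class at most $k+1\le p-1$ and is therefore regular, whence $|K:K^{p^i}|=|\Omega_{\{i\}}(K)|$. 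Sandwiching this between the hypothesised equality for $G$ forces $G^{p^i}\gamma_k(G)=G^{p^i}\gamma_{k+1}(G)$, from which $\gamma_k(G)\le G^{p^i}$ follows. For $k=p-1$, $i\ge 2$ the word is replaced by $x^{p^i}[y_1,\ldots,y_{p-1}]^{p^{i-1}}[z_1,\ldots,z_p]$ and ordinary regularity by Bannuscher's $i$-regularity. The passage from $k$ to $k+1$ and the regularity of the $\omega$-maximal subgroup are precisely the tools your proposal lacks; as for (1)$\Rightarrow$(2), the paper simply cites \cite{Wil}, \cite{GJ} and \cite{HL}, whereas your coset-representative sketch only yields one of the two required inequalities.
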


Theorem B is false for $k=p-1$ and $i=1$. We will construct an infinite family of counterexamples to this fact.

\begin{thmC}
Given an odd prime $p$ and a positive integer $s\geq p+1$, there exists a finite $p$-group $G$ such that
\begin{enumerate}
\item $|G|=p^s$,
\item $G$ is of maximal class,
\item $|G:G^p\gamma_{p-1}(G)|=|\Omega_1(G)|$,
\item $\gamma_{p-1}(G)\nleq G^p$.
\end{enumerate}
\end{thmC}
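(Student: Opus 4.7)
The plan is to exhibit the counterexamples as an explicit one-parameter family built from cyclotomic arithmetic. As a preliminary reduction, in any $p$-group of maximal class of order $p^s$ one has $|G:\gamma_{p-1}(G)|=p^{p-1}$; hence as soon as $G^p\subsetneq\gamma_{p-1}(G)$ one automatically obtains $|G:G^p\gamma_{p-1}(G)|=p^{p-1}$ and $\gamma_{p-1}(G)\not\le G^p$, so (3) and (4) boil down to producing a maximal-class $p$-group of order $p^s$ with $G^p\subsetneq\gamma_{p-1}(G)$ and $|\Omega_1(G)|=p^{p-1}$.

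To build such a group I would take $R=\Z_p[\zeta_p]$ with uniformizer $\pi=1-\zeta_p$ (so $p=u\pi^{p-1}$ for a unit $u\in R^\times$), set $n=s-1\ge p$ and $A=R/\pi^nR$, and define
\[
G=\langle A,t\mid t^{-1}at=\zeta_p\, a\ \text{for }a\in A,\ t^p=\pi^{n-1}\rangle.
\]
Because $(\zeta_p-1)\pi^{n-1}=0$ and $p\pi^{n-1}=u\pi^{n+p-2}=0$ in $A$, the presentation is consistent, $|G|=p^s$, and $t$ has order $p^2$. The identity $[a,t]=(\zeta_p-1)a=-\pi a$ together with a short induction gives $\gamma_k(G)=\pi^{k-1}A$ for $k\ge 2$, so $G$ has nilpotency class $n=s-1$ and is of maximal class, settling (1) and (2).

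The heart of the argument is the $p$-th power computation in $G$. For $k\in\{1,\dots,p-1\}$ and $a\in A$, expanding $(at^k)^p$ using the commutation $t^ka=\zeta_p^{-k}a\cdot t^k$ and then invoking the non-split relation $t^{pk}=k\pi^{n-1}$ to handle the wrap, I expect
\[
(at^k)^p=\Big(\sum_{i=0}^{p-1}\zeta_p^{-ik}\Big)a+k\pi^{n-1}=k\pi^{n-1}\neq 0,
\]
because the cyclotomic sum vanishes. Once this is in place, every element of $G\setminus A$ has order $p^2$, so $\Omega_1(G)=\Omega_1(A)=\pi^{n-p+1}A$ has order $p^{p-1}$; and since $pA=\pi^{p-1}A$ already absorbs all the powers $k\pi^{n-1}$ (as $n-1\ge p-1$), we conclude $G^p=\pi^{p-1}A=\gamma_p(G)\subsetneq\pi^{p-2}A=\gamma_{p-1}(G)$. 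Properties (3) and (4) follow at once.

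The crucial and only delicate step is the choice of the non-split lift $t^p=\pi^{n-1}$: in the split extension $A\rtimes C_p$ the cyclotomic sum still vanishes but there is no correction term, so $(at^k)^p=0$ for every $a$ and every $k\neq 0$, and a short argument then yields $\Omega_1(G)=G$ (this is exactly what happens for the wreath product $C_p\wr C_p$). Fixing $t^p$ to be a nonzero element of the socle of $A$ is precisely what eliminates the order-$p$ elements of $G\setminus A$ and drives $|\Omega_1(G)|$ down to $p^{p-1}$.
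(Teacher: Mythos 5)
Your construction is the same group as the paper's Example \ref{ejemplo}: the lattice $M$ with the uniserial automorphism $\alpha$ is exactly $\Z_p[\zeta_p]$ with multiplication by $\zeta_p$, and your non-split extension with $t^p$ a generator of the socle of $R/\pi^nR$ is precisely the paper's $G_r$ with $y^p=z$. The only difference is cosmetic: you verify $(at^k)^p=k\pi^{n-1}\neq 0$ by a direct cyclotomic-sum computation where the paper invokes the Hall--Petrescu formula and the known order structure of the infinite pro-$p$ group of maximal class; both arguments are correct.
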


The text is organized as follows: In Section 2 we recall the main properties of $\omega$-maximal groups; and in Section 3 we prove Theorem B and construct the family of counterexamples that prove Theorem C.

\textit{Notation}: $\Omega_{\{ i\}}(G)$ denotes the subset of elements of order $p^i$ and $\Omega_i(G)$ the subgroup generated by the elements of order $p^i$. $G^{p^i}$ will denote the subgroup generated by the $p^i$-powers of $G$. The rest of the notation is standard in group theory.

\section{$\omega$-maximal groups and interchangeable words}

Let $G$ be a finite group and $\omega (x_1,\ldots ,x_n)\in F (X)$ an element in the free group over the set $X=\{ x_1,\ldots ,x_n\}$. The verbal subgroup $\omega (G)$ is defined as the subgroup generated 
by the subset of $G$, 
$$\{\omega (g_1,\ldots ,g_n)\mid g_1,\ldots ,g_n\in G\}.$$ 
We say that $G$ is $\omega$-maximal if for all proper subgroups $H$ of $G$, $|H:\omega (H)|< |G:\omega (G)|$. The theory of $\omega$-maximal groups was studied in \cite{GK}, but the concept was already present in \cite{Th69}, \cite{La73} and \cite{Ka91}. An important family of words are the so called interchangeable words. We say that a word $\omega$ is interchangeable in a group $G$ if for every normal subgroup $N$ of $G$, one has that
$$[\omega (N),G]\leq [N,\omega (G)]\cdot [\omega (G),G]^p\cdot [\omega (G),G,G].$$
We will need the following lemma about interchangeable words.

\begin{lem}
  \label{swap}
  Let $p$ be an odd prime and $G$ a finite $p$-group, and let $\omega$ be equal to one of the
  group words
  \begin{enumerate}
  \item[\textup{(i)}] $x^{p^i} [y_1,\ldots ,y_k]$ for some $i, k \in
    \N$ with $k \leq p-1$,
  \item[\textup{(ii)}] $x^{p^i} [y_1,\ldots ,y_{p-1}]^{p^{i-1}}[z_1,\ldots ,z_p]$ for some $i \in \N$
    with $i\geq 2$.
  \end{enumerate}
  Then $\omega$ is interchangeable in $G$.
\end{lem}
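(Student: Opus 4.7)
My plan is to reduce the inclusion to factor-by-factor commutator calculations in the quotient $G/H$, where $H:=[\omega(G),G,G]$. Fix a normal subgroup $N$ of $G$ and set $R:=[N,\omega(G)]\cdot[\omega(G),G]^p\cdot H$. The point is that modulo $H$ the commutator becomes ``bilinear'' whenever one argument lies in $\omega(G)$: since $[xy,g]=[x,g]^y[y,g]$ and $[x,g]^y\equiv[x,g]\pmod{H}$ for $x\in\omega(G)$. Because $\omega(N)$ is generated by elements of the form $\omega(n_1,n_2,\ldots)$ with all $n_j\in N$, this bilinearity lets us split $[\omega(n_1,n_2,\ldots),g]$ into contributions from each ``syllable'' of $\omega$ (the power factor, the commutator factors, and in case (ii) the power-commutator factor) and verify each one separately.

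The two main tools are (a) the Hall--Petresco-type expansion
$$[x^n,y]\equiv [x,y]^n\cdot [x,y,x]^{\binom{n}{2}}\cdot [x,y,x,x]^{\binom{n}{3}}\cdots$$
modulo commutators of higher nesting in $x$, together with the analogous formula for $[x,y^n]$; and (b) the Hall--Witt/Jacobi identity for triple commutators modulo $\gamma_4$. The arithmetic input is that $\binom{p^i}{j}$ is divisible by $p$ for $1\leq j\leq p-1$, and only at $j=p$ does the $p$-divisibility drop to $p^{i-1}$. In case (i), where $\omega(G)=G^{p^i}\gamma_k(G)$, I apply (a) to both $[n^{p^i},g]$ and $[n,g^{p^i}]$: the leading terms $[n,g]^{p^i}$ coincide, $[n,g^{p^i}]\in[N,\omega(G)]$ is directly in $R$, and each correction $[n,g,n,\ldots,n]^{\binom{p^i}{j}}$ for $2\leq j\leq k\leq p-1$ has a $p$-divisible exponent; a further Hall--Petresco move pushing the power into the inner slot produces $[\cdots,n^{p^i}]\in[\omega(G),G]$ (using $n^{p^i}\in\omega(G)$), so the correction lands in $[\omega(G),G]^p\cdot H$. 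For the commutator syllable $[[m_1,\ldots,m_k],g]$, iterated Hall--Witt rewrites it modulo higher-weight terms as a product of $[m_1,\ldots,[m_j,g],\ldots,m_k]\in[N,\gamma_k(G)]\subseteq[N,\omega(G)]$, using normality of $N$.

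Case (ii) adds the new syllable $[m_1,\ldots,m_{p-1}]^{p^{i-1}}$. For $c:=[m_1,\ldots,m_{p-1}]\in\gamma_{p-1}(N)$ we have $[c,g]\in\gamma_p(G)\leq\omega(G)$, and Hall--Petresco gives leading term $[c,g]^{p^{i-1}}=([c,g]^{p^{i-2}})^p\in[\omega(G),G]^p$; this is where the hypothesis $i\geq 2$ enters crucially. The higher Hall--Petresco terms either carry $p$-divisible coefficients (for $j\leq p-1$) or land in deeper commutator subgroups already absorbed into $H$ (for $j\geq p$). The syllable $[z_1,\ldots,z_p]$ is handled as the commutator case of (i) with $k=p$: now $\gamma_p(G)\leq\omega(G)$ absorbs precisely the step that had been obstructed at $k=p$ in case (i). The main obstacle will be the bookkeeping: systematically routing every Hall--Petresco correction into one of $[N,\omega(G)]$, $[\omega(G),G]^p$, or $H$, possibly after a Hall--Witt rearrangement that places an $\omega(G)$-element into the first commutator slot. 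The hypotheses $k\leq p-1$ in (i) and $i\geq 2$ in (ii) are calibrated exactly so that the $p$-divisibility of $\binom{p^i}{j}$ suffices to push every correction into $[\omega(G),G]^p$.
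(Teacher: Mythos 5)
Your overall strategy (syllable-by-syllable reduction modulo $H=[\omega(G),G,G]$, Hall--Petresco expansions, three-subgroups rearrangements, and the $p$-adic valuation of $\binom{p^i}{j}$) is the right kind of machinery; it amounts to a self-contained version of what the paper outsources, since part (i) is quoted verbatim from \cite[Lemma 3.1]{GK} and part (ii) is reduced to the power-commutator congruences of \cite[Theorems 2.4 and 2.5]{FeGoJa} together with $[\gamma_p(N),G]\leq[\gamma_p(G),N]$ from \cite[Lemma 4.9]{Kh}. A minor inaccuracy first: in case (i) the Hall--Petresco corrections to $[n^{p^i},g]$ are not indexed by $2\leq j\leq k$; they run over all weights up to the nilpotency class, and the relevant dichotomy is at $j=p$ (for $j\leq p-1$ the exponent $\binom{p^i}{j}$ is divisible by $p^i$, while for $j\geq p$ the term has weight at least $p+1\geq k+2$ and hence lies in $\gamma_{k+2}(G)=[\gamma_k(G),G,G]\leq H$; this is the actual point where $k\leq p-1$ is used).

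The serious gap is your treatment of the syllable $c^{p^{i-1}}$ with $c=[m_1,\ldots,m_{p-1}]\in\gamma_{p-1}(N)$ in case (ii). You claim $[c,g]^{p^{i-1}}=([c,g]^{p^{i-2}})^p\in[\omega(G),G]^p$, but $[c,g]^{p^{i-2}}$ is not an element of $[\omega(G),G]$: you only know $[c,g]\in\gamma_p(G)\leq\omega(G)$, and membership in $\omega(G)$ does not give membership in the commutator subgroup $[\omega(G),G]$. Indeed this syllable contributes elements of $\gamma_p(G)^{p^{i-1}}$, whereas $[\omega(G),G]^p\cdot H$ only captures, roughly, $[G,G]^{p^{i+1}}\gamma_p(G)^{p^{i}}\gamma_{p+1}(G)^p\gamma_{p+2}(G)$ --- one factor of $p$ too deep --- so this term cannot be routed into $[\omega(G),G]^p\cdot H$ at all. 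It must go into $[N,\omega(G)]$, which is precisely why the paper first swaps $[\gamma_{p-1}(N),G]\leq[N,\gamma_{p-1}(G)]\cdot(\text{higher weight})$ via \cite[Lemma 4.9]{Kh} and only then transfers the exponent $p^{i-1}$ onto the $\gamma_{p-1}(G)$-entry to land in $[N,\gamma_{p-1}(G)^{p^{i-1}}]\leq[N,\omega(G)]$. Relatedly, the hypothesis $i\geq 2$ is not used where you place it: it is needed so that the weight-$(p+1)$ corrections, whose exponents are divisible only by $p^{i-1}$, still carry a factor of $p$ and fall into $\gamma_{p+1}(G)^p\leq[\omega(G),G]^p$. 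As written, the sketch does not establish (ii).
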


\begin{proof}
For (i) see \cite[Lemma 3.1]{GK}. In order to prove (ii), first notice that for 
$$\omega =x^{p^i} [y_1,\ldots ,y_{p-1}]^{p^{i-1}}[z_1,\ldots ,z_p],$$
and for $G$ a finite $p$-group one has that
$$\omega (G)=G^{p^i}\gamma_{p-1}(G)^{p^{i-1}}\gamma_p(G),$$
and for $N$ a normal subgroup of $G$,
$$\omega (N)=N^{p^i}\gamma_{p-1}(N)^{p^{i-1}}\gamma_p(N).$$
We claim that
$$[\omega (G),G]^p[\omega (G),G,G]=[G,G]^{p^{i+1}}[G,G,G]^{p^i}\gamma_{p+1}(G)^p\gamma_{p+2}(G).$$
Indeed, by \cite[Theorem 2.5]{FeGoJa} and since $i\geq 2$,
\begin{eqnarray*}
[G^{p^i},G]& \equiv & [G,G]^{p^i} \pmod {\gamma_{p+1}(G)^p\gamma_{p+2}(G)} \text{ and }\\
{[}\gamma_{p-1}(G)^{p^{i-1}},G] &  \equiv  & \gamma_p(G)^{p^{i-1}} \pmod {\gamma_{p+1}(G)^p\gamma_{p+2}(G)}.
\end{eqnarray*}
Therefore
$$[\omega (G),G]\equiv [G,G]^{p^i}\gamma_p(G)^{p^{i-1}}\gamma_{p+1}(G)\pmod {\gamma_{p+1}(G)^p\gamma_{p+2}(G)},$$
and by P. Hall collection formula, 
$$[\omega (G),G]^p\equiv [G,G]^{p^{i+1}}\gamma_p(G)^{p^i}\pmod {\gamma_{p+1}(G)^p\gamma_{p+2}(G)}.$$
Similarly, by  \cite[Theorem 2.5]{FeGoJa},
\begin{eqnarray*}
[G^{p^i},G,G]& \equiv & [G,G,G]^{p^i} \pmod {\gamma_{p+1}(G)^p\gamma_{p+2}(G)} \text{ and }\\
{[}\gamma_{p-1}(G)^{p^{i-1}},G,G] &  \equiv  & \gamma_{p+1}(G)^{p^{i-1}} \pmod {\gamma_{p+1}(G)^p\gamma_{p+2}(G)}.
\end{eqnarray*}
Therefore, since $\gamma_{p+1}(G)^p\gamma_{p+2}(G)$ is contained in $[\omega (G),G]^p[\omega (G),G,G]$ we have the desired equality
$$[\omega (G),G]^p[\omega (G),G,G]=[G,G]^{p^{i+1}}[G,G,G]^{p^i}\gamma_{p+1}(G)^p\gamma_{p+2}(G).$$
Now, by  \cite[Theorem 2.4]{FeGoJa}, 
\begin{eqnarray*}
[N^{p^i},G] \equiv   [N,G]^{p^i}  \equiv   [N,G^{p^i}] \pmod {\gamma_{p+1}(G)^p\gamma_{p+2}(G)}.
\end{eqnarray*}
By the same theorem and since $[\gamma_p(N),G]\leq [\gamma_p(G),N]$ (see \cite[Lemma 4.9]{Kh}), we deduce
$$[\gamma_{p-1}(N)^{p^{i-1}},G ]\leq    [N,\gamma_{p-1}(G)^{p^{i-1}} ]\gamma_{p+1}(G)^p\gamma_{p+2}(G).$$
Hence one can easily conclude that
$$[\omega (N),G] \leq [N,\omega (G)] \pmod{[\omega (G),G]^p[\omega (G),G,G]},$$
and $\omega$ is interchangeable in $G$.
\end{proof}

The theory of $\omega$-maximal $p$-groups for which $\omega$ is an interchangeable word is very particular and will lead to the proof of the main results of this paper.

\begin{teo}
  \label{thm:thompson}
  Let $\omega$ be a word, and let $G$ be a $\omega$-maximal finite $p$-group
  such that $\omega$ is interchangeable in $G$.  Then one has $\omega (G) \leq
  \Zen (G)$.
\end{teo}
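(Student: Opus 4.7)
I would argue by contradiction, taking $G$ to be a minimal counterexample. The plan has two stages: first reduce, by induction, to the case where $A:=[\omega(G),G]$ has order $p$ and is central in $G$; then construct a non-degenerate commutator pairing and derive a contradiction by playing $\omega$-maximality against interchangeability at the centralizer $C:=C_G(\omega(G))$.

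For the reduction, pick a $G$-normal subgroup $N\leq A$ of minimal order; in a finite $p$-group such an $N$ is automatically central with $|N|=p$. Both hypotheses pass to $G/N$: $\omega$-maximality because $N\leq A\leq\omega(G)$ forces
\begin{equation*}
|H/N:\omega(H/N)|=|H:\omega(H)N|\leq|H:\omega(H)|<|G:\omega(G)|=|G/N:\omega(G/N)|
\end{equation*}
for every proper $H/N<G/N$, while interchangeability passes to quotients routinely from its definition. By minimality of $G$ we then have $\omega(G/N)\leq Z(G/N)$, so $A\leq N$, whence $A=N$ has order exactly $p$ and is central in $G$.

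With $A\leq Z(G)$ and $|A|=p$, the commutator identities collapse to
\begin{equation*}
[gg',h]=[g,h][g',h]\quad\text{and}\quad [g,hh']=[g,h][g,h']
\end{equation*}
for $g,g'\in G$ and $h,h'\in\omega(G)$. Writing $Z_0:=Z(G)\cap\omega(G)$, the commutator $(g,h)\mapsto[g,h]$ descends to a pairing
\begin{equation*}
G/C\,\times\,\omega(G)/Z_0\ \longrightarrow\ A
\end{equation*}
that is $\F_p$-bilinear and non-degenerate by the very definitions of $C$ and $Z_0$. Each partial map is a homomorphism into the elementary abelian group $A$, so $\Phi(G)\leq C$ and $\Phi(\omega(G))\leq Z_0$; both $G/C$ and $\omega(G)/Z_0$ are therefore elementary abelian, and duality of finite $\F_p$-spaces gives $|G/C|=|\omega(G)/Z_0|$.

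The contradiction now comes from playing the two hypotheses against one another at $C$. Interchangeability applied with $K=C$ reads $[\omega(C),G]\leq[C,\omega(G)]\cdot[\omega(G),G]^p\cdot[\omega(G),G,G]$, and all three factors on the right vanish (the first by definition of $C$, the other two because $|A|=p$ and $A\leq Z(G)$), so $\omega(C)\leq Z(G)$. Combined with the trivial $\omega(C)\leq\omega(G)$ this gives $\omega(C)\leq Z_0$, and hence $|\omega(G):\omega(C)|\geq|\omega(G):Z_0|=|G:C|$. On the other hand $C<G$ by the assumption that $\omega(G)\not\leq Z(G)$, so $\omega$-maximality of $G$ yields $|C:\omega(C)|<|G:\omega(G)|$; rearranging the index identity $|G:C|\cdot|C:\omega(C)|=|G:\omega(G)|\cdot|\omega(G):\omega(C)|$ converts this into $|\omega(G):\omega(C)|<|G:C|$, contradicting the previous inequality. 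The one step I expect to demand care is the third paragraph: verifying that the commutator map is genuinely $\F_p$-bilinear, non-degenerate, and that both $G/C$ and $\omega(G)/Z_0$ are elementary abelian of the same order — this symmetric equality is the pivot on which interchangeability (pushing $\omega(C)$ down into $Z_0$) and $\omega$-maximality (pushing $\omega(C)$ up past $Z_0$) are forced to collide.
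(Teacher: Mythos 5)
Your proposal is correct. The paper itself gives no argument here beyond the citation to \cite[Theorem 3.3]{GK}, and your proof is essentially a faithful reconstruction of the argument given there: a minimal counterexample reducing to $[\omega(G),G]$ central of order $p$, the non-degenerate commutator pairing giving $|G:C_G(\omega(G))|=|\omega(G):Z(G)\cap\omega(G)|$, and the index count pitting interchangeability (which forces $\omega(C)$ into $Z(G)\cap\omega(G)$, since $C=C_G(\omega(G))$ is normal in $G$ as the centralizer of a verbal, hence normal, subgroup) against $\omega$-maximality. All the individual steps check out, including the passage of both hypotheses to $G/N$ and the final rearrangement $|G:C|\cdot|C:\omega(C)|=|G:\omega(G)|\cdot|\omega(G):\omega(C)|$.
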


\begin{proof}
See \cite[Theorem 3.3]{GK}.
\end{proof}

\section{The characterization of powerful $p$-groups and uniform pro-$p$ groups}

In this section we give the proof of Theorem B. We start with the case $p\geq 5$.

\begin{teo}
\label{main_p5}
Let $p\geq 5$, $k\leq p-2$ and let $G$ be a finite $p$-group . Then the following conditions are equivalent:
\begin{enumerate}
\item $\gamma_k(G)\leq G^{p^i}$,
\item $|G:G^{p^i}\gamma_k(G)|=|\Omega_{\{i\}}(G)|$.
\end{enumerate}
\end{teo}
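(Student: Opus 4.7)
The strategy is to apply the machinery of $\omega$-maximal groups to the word $\omega(x,y_1,\ldots,y_k) = x^{p^i}[y_1,\ldots,y_k]$, whose verbal subgroup in a finite $p$-group $G$ is $\omega(G) = G^{p^i}\gamma_k(G)$. Since $k \leq p-2 \leq p-1$, Lemma \ref{swap}(i) shows that $\omega$ is interchangeable in every finite $p$-group. The key auxiliary estimate I would first establish is the universal inequality
$$|H:H^{p^i}\gamma_k(H)| \leq |\Omega_{\{i\}}(H)|,$$
valid for every finite $p$-group $H$: the quotient $H/\gamma_k(H)$ has class $\leq k-1 \leq p-3 < p$, hence is regular, and in a regular $p$-group of exponent dividing $p^i$ every element has order dividing $p^i$, so a Hall--Petrescu lifting finds, in every coset of $\omega(H)$, a representative of order dividing $p^i$ in $H$.

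For the direction (1) $\Rightarrow$ (2), assuming $\gamma_k(G) \leq G^{p^i}$ one has $\omega(G) = G^{p^i}$. The quotient $\bar{G} = G/\gamma_k(G)$ is regular with $\bar{G}^{p^i} = G^{p^i}/\gamma_k(G)$, giving $|G:G^{p^i}| = |\bar{G} : \bar{G}^{p^i}| = |\Omega_{\{i\}}(\bar{G})|$. The containment $\gamma_k(G) \leq G^{p^i}$ provides enough control to lift each coset of $\gamma_k(G)$ contributing to $\Omega_{\{i\}}(\bar{G})$ to an element of order dividing $p^i$ in $G$ with the expected fiber count, yielding $|\Omega_{\{i\}}(\bar{G})| = |\Omega_{\{i\}}(G)|$ and hence (2).

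For the harder direction (2) $\Rightarrow$ (1), I would argue by induction on $|G|$ and let $G$ be a minimal counterexample. The first objective is to show that $G$ is $\omega$-maximal: for any proper $H < G$, the universal inequality together with the containment $\Omega_{\{i\}}(H) \subseteq \Omega_{\{i\}}(G)$ gives $|H:\omega(H)| \leq |\Omega_{\{i\}}(H)| \leq |\Omega_{\{i\}}(G)| = |G:\omega(G)|$. If all of these were equalities, then $\Omega_i(G) \leq H$ and condition (2) would hold for $H$; by minimality this would give $\gamma_k(H) \leq H^{p^i}$, and a Frattini-type propagation argument combined with the ambient hypothesis on $G$ is designed to push the conclusion back to $G$, contradicting $\gamma_k(G) \nleq G^{p^i}$. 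Hence $G$ is $\omega$-maximal. Applying Theorem \ref{thm:thompson} via the interchangeability of $\omega$ yields $\omega(G) = G^{p^i}\gamma_k(G) \leq \Zen(G)$; in particular $\gamma_k(G) \leq \Zen(G)$, so $G$ has nilpotency class at most $k \leq p-2 < p$ and is itself a regular $p$-group. In regular $G$ the standard identity $|G:G^{p^i}| = |\Omega_{\{i\}}(G)|$ combined with (2) forces $|G:G^{p^i}| = |G:G^{p^i}\gamma_k(G)|$, i.e.\ $\gamma_k(G) \leq G^{p^i}$, contradicting the choice of $G$.

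The main obstacle I anticipate is the $\omega$-maximality step: ruling out the proper subgroups $H$ with $\Omega_{\{i\}}(H) = \Omega_{\{i\}}(G)$ that also satisfy (2), since condition (2) is not visibly inherited in a way that transfers the inductive conclusion $\gamma_k(H) \leq H^{p^i}$ back to $G$. Once this propagation is set up, the remainder of the argument is a clean combination of Theorem \ref{thm:thompson} with the regular $p$-group identity for groups of class less than $p$.
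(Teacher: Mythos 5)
Your proposal assembles the right tools (the interchangeable word, Theorem \ref{thm:thompson}, and regularity of groups of class less than $p$), but the two steps you yourself flag as delicate are genuine gaps, and there is a structural reason the argument cannot close as written: nothing in it uses the hypothesis $k\leq p-2$ in an essential way. All your regularity appeals only need class $<p$, so the identical argument would ``prove'' the case $k=p-1$, $i=1$ --- which is false: Example \ref{ejemplo} exhibits groups of maximal class with $|G:G^{p}\gamma_{p-1}(G)|=|\Omega_{1}(G)|$ and $\gamma_{p-1}(G)\nleq G^{p}$. Concretely, the ``universal inequality'' $|H:H^{p^i}\gamma_k(H)|\leq|\Omega_{\{i\}}(H)|$ is not established: the quotient $H/\gamma_k(H)$ is indeed regular, but an element whose $p^i$-th power lies in $\gamma_k(H)$ need not be congruent modulo $H^{p^i}\gamma_k(H)$ to an element of order dividing $p^i$ in $H$; no Hall--Petrescu computation hands you such a lift, and this claim is essentially the hard content of the $(1)\Rightarrow(2)$ direction, which the paper imports from \cite{Wil} and \cite{GJ} and which does use the hypothesis $\gamma_k(G)\leq G^{p^i}$. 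Likewise the $\omega$-maximality of $G$ itself is never obtained: even granting the universal inequality you only get non-strict inequalities for proper subgroups, and the ``Frattini-type propagation'' needed to dispose of the equality case is precisely the missing argument, not a routine one (for $k=p-1$ it provably cannot exist).

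The paper avoids both problems with one move: it never tries to show that $G$ is $\omega$-maximal, and it works with the word $\omega=x^{p^i}[y_1,\ldots ,y_{k+1}]$ with $k+1$ commutator entries --- this is exactly where $k\leq p-2$ enters, so that $k+1\leq p-1$ and Lemma \ref{swap}(i) applies. One takes $K$ minimal in $\mathcal{C}=\{ H\leq G\mid |H:H^{p^i}\gamma_{k+1}(H)|\geq |G:G^{p^i}\gamma_{k+1}(G)|\}$; minimality makes $K$ $\omega$-maximal for free, Theorem \ref{thm:thompson} gives $K^{p^i}\gamma_{k+1}(K)\leq Z(K)$, hence $K$ has class at most $k+1\leq p-1$, is regular, and satisfies $|K:K^{p^i}|=|\Omega_{\{i\}}(K)|$. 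The chain
$$|G:G^{p^i}\gamma_{k+1}(G)|\leq |K:K^{p^i}\gamma_{k+1}(K)|\leq |K:K^{p^i}|=|\Omega_{\{i\}}(K)|\leq |\Omega_{\{i\}}(G)|=|G:G^{p^i}\gamma_{k}(G)|\leq |G:G^{p^i}\gamma_{k+1}(G)|$$
then collapses to equalities, giving $G^{p^i}\gamma_k(G)=G^{p^i}\gamma_{k+1}(G)$ and hence $\gamma_k(G)\leq G^{p^i}$ by nilpotency. If you want to salvage your draft, replace the induction and minimal-counterexample scaffolding by this minimal-subgroup sandwich.
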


\begin{proof}
(1) implies (2) follows from  \cite{Wil} or \cite {GJ}.

For the converse take 
$$\mathcal{C}=\{ H\leq G\mid |H:H^{p^i}\gamma_{k+1}(H)|\geq |G:G^{p^i}\gamma_{k+1}(G)|\}$$ 
and take $K$ a minimal element in $\mathcal{C}$ with respect to inclusion. It follows that for all subgroups $H$ of $K$, $|K:K^{p^i}\gamma_{k+1}(K)|> |H:H^{p^i}\gamma_{k+1}(H)|$. That is, the subgroup $K$ is $\omega$-maximal for the word $\omega =x^{p^i}[y_1,y_2,\ldots ,y_{k+1}]$. By Lemma \ref{swap} we have that the word $\omega$ is interchangeable in $G$. Thus, by Theorem \ref{thm:thompson}, it follows that $K^{p^i}\gamma_{k+1}(K)\subseteq Z(K)$. Therefore  the nilpotency class of the group $K$ is at most $k+1\leq p-1$. In particular the $p$-group $K$ is regular and $|K:K^{p^i}|=|\Omega_{\{i\}}(K)|$. Then we have the following inequalities:

\begin{equation*}
\begin{split}
|G:G^{p^i}\gamma_{k+1}(G)| & \leq  |K:K^{p^i}\gamma_{k+1}(K)|\leq |K:K^{p^i}| \\ 
& =|\Omega_{\{i\}}(K)| \leq  |\Omega_{\{i\}}(G)|= |G:G^{p^i}\gamma_k(G)|\\ 
&\leq |G:G^{p^i}\gamma_{k+1}(G)|.
\end{split}
\end{equation*}
Since the first and the last term are the same all the inequality are equalities. Hence $G^{p^i}\gamma_k(G)=G^{p^i}\gamma_{k+1}(G)$. This implies that $\gamma_k(G)\leq G^{p^i}$.
\end{proof}

As a consequence we can provide a positive answer to the Question 1.9 in \cite{KS} for $p\geq 5$.

\begin{cor}
Let $p\geq 5$ and let $G$ a finite $p$-group. Then the following conditions are equivalent:
\begin{enumerate}
\item $G$ is powerful,
\item $d(G)=\log_p (|\Omega_1(G)|)$.
\end{enumerate}
\end{cor}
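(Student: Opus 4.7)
The plan is to deduce both implications of the corollary from Theorem~\ref{main_p5} applied with $i=1$ and $k=2$; this is permissible since $p\geq 5$ forces $k=2\leq p-2$. Because $|G:G^p[G,G]|=p^{d(G)}$ always holds, that theorem can be rephrased as: \emph{$G$ is powerful if and only if $|\Omega_{\{1\}}(G)|=p^{d(G)}$}. The substantive remaining work is then to reconcile the \emph{set} $\Omega_{\{1\}}(G)$, controlled by Theorem~\ref{main_p5}, with the \emph{subgroup} $\Omega_1(G)$ appearing in condition~(2) of the corollary.

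For (1)$\Rightarrow$(2): if $G$ is powerful then, as is classical (see \cite{DDMS}), the set $\Omega_{\{1\}}(G)$ is already a subgroup and coincides with $\Omega_1(G)$; combined with the equivalence just restated, $|\Omega_1(G)|=|\Omega_{\{1\}}(G)|=p^{d(G)}$, which is~(2).

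For the converse, assume $|\Omega_1(G)|=p^{d(G)}$. The inclusion $\Omega_{\{1\}}(G)\subseteq\Omega_1(G)$ immediately gives $|\Omega_{\{1\}}(G)|\leq p^{d(G)}$, so it remains only to produce the reverse inequality $p^{d(G)}\leq|\Omega_{\{1\}}(G)|$. I would obtain it by rerunning the proof of Theorem~\ref{main_p5} with the smaller interchangeable word $\omega=x^p[y_1,y_2]$ (Lemma~\ref{swap}(i) with $k+1=2\leq p-1$, valid for $p\geq 3$): choose $K\leq G$ minimal in the collection $\{H\leq G\mid |H:H^p[H,H]|\geq |G:G^p[G,G]|\}$. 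Then $K$ is $\omega$-maximal, so Theorem~\ref{thm:thompson} forces $K^p[K,K]\leq Z(K)$; hence $K$ has nilpotency class at most $2<p$ and is regular, so $|K:K^p|=|\Omega_{\{1\}}(K)|$. The chain
\[
p^{d(G)}=|G:G^p[G,G]|\leq |K:K^p[K,K]|\leq |K:K^p|=|\Omega_{\{1\}}(K)|\leq |\Omega_{\{1\}}(G)|
\]
supplies the missing bound. With $|\Omega_{\{1\}}(G)|=p^{d(G)}$ now in hand, Theorem~\ref{main_p5} concludes $\gamma_2(G)\leq G^p$, i.e.\ $G$ is powerful.

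The main obstacle is precisely this set-versus-subgroup discrepancy: it is invisible in direction (1)$\Rightarrow$(2) because powerfulness already forces $\Omega_{\{1\}}(G)$ and $\Omega_1(G)$ to coincide, but the converse genuinely relies on the universal lower bound $p^{d(G)}\leq|\Omega_{\{1\}}(G)|$, which is extracted by a second application of the $\omega$-maximal machinery with one fewer commutator entry in the word.
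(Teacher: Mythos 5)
Your proposal is correct and follows the paper's route: the corollary is obtained from Theorem \ref{main_p5} with $k=2$ and $i=1$, which is exactly what the paper does. The added value of your write-up is that you make explicit the reconciliation between the set $\Omega_{\{1\}}(G)$ appearing in Theorem \ref{main_p5} and the subgroup $\Omega_1(G)$ appearing in the corollary, a point the paper passes over with ``follows immediately.'' Your treatment of it is sound: for (1)$\Rightarrow$(2) the coincidence $\Omega_{\{1\}}(G)=\Omega_1(G)$ for powerful $G$ with $p$ odd is standard (it is really \cite{HL}/\cite{Wil} rather than \cite{DDMS}, a citation quibble only), and for (2)$\Rightarrow$(1) the needed universal bound $p^{d(G)}\leq|\Omega_{\{1\}}(G)|$ is correctly extracted via the $\omega$-maximal argument with $\omega=x^p[y_1,y_2]$ --- note that this inequality is already contained, unconditionally, in the first part of the displayed chain in the paper's proof of Theorem \ref{main_p5} (it is also classical, going back to Laffey \cite{La73}), so no genuinely new machinery is involved.
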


\begin{proof}
The proof follows immediately from the case $k=2$ and $i=1$ in the previous theorem.
\end{proof}

And we can solve positively Conjecture 1.1 of \cite{KS} for $p\geq 5$.

\begin{cor}
Let $p\geq 5$ and let $G$ be a torsion-free $p$-adic analytic pro-$p$ group. Then the following conditions are equivalent:
\begin{enumerate}
\item $G$ is uniform,
\item $d(G)=\text{dim} (G)$.
\end{enumerate}
\end{cor}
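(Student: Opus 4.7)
The plan is to deduce (1)$\Rightarrow$(2) from the standard theory of uniform pro-$p$ groups, and to reduce the converse to the preceding Corollary applied to a well-chosen cofinal family of finite quotients of $G$.

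For (1)$\Rightarrow$(2): if $G$ is uniform, then by definition $d(G)=\dim(G)$ (see \cite{DDMS}). For the converse, set $d=d(G)=\dim(G)$. Since $G$ is already torsion-free and finitely generated, the task is to show that $G$ is powerful, i.e.\ $[G,G]\le G^p$. I would fix a characteristic open uniform subgroup $V\le G$ of dimension $d$ (whose existence is standard), and consider the open normal subgroups $V^{p^k}\trianglelefteq G$. Since the $V^{p^k}$ form a base of neighbourhoods of $1$, for $k$ large one has $V^{p^k}\le \Phi(G)$ and hence $d(G/V^{p^k})=d(G)=d$.

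The key intermediate statement I would prove is that $|\Omega_1(G/V^{p^k})|=p^d$ for all sufficiently large $k$. Granted this, the preceding Corollary (applied to the finite $p$-group $G/V^{p^k}$, using $p\ge 5$) implies that $G/V^{p^k}$ is powerful, so $[G,G]\le G^p V^{p^k}$. Since $G^p$ is closed in the finitely generated pro-$p$ group $G$ and $\bigcap_k V^{p^k}=1$, a routine compactness argument yields $\bigcap_k G^p V^{p^k}=G^p$, and hence $[G,G]\le G^p$ as desired.

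The main obstacle is establishing $|\Omega_1(G/V^{p^k})|\le p^d$, since the reverse inequality follows at once from the inclusion $V^{p^{k-1}}\subseteq\{g\in G\mid g^p\in V^{p^k}\}$. I would argue by contradiction: if the bound failed for infinitely many $k$, one could pick witnesses $g_k\in G$ with $g_k^p\in V^{p^k}$ but $g_k\notin V^{p^{k-1}}$. By compactness of $G$ a subsequence converges to some $g_\infty$, and from $g_k^p\to 1$ together with torsion-freeness of $G$ one deduces $g_\infty=1$. Hence eventually $g_k\in V$; but in the uniform group $V$ the $p$-th power map is injective and $V^{p^{k}}=\{w^{p^{k}}\mid w\in V\}$, so $g_k\in V$ with $g_k^p\in V^{p^k}$ forces $g_k\in V^{p^{k-1}}$, contradicting the choice of $g_k$. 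This compactness-plus-injectivity step is the heart of the argument and the only place where the combination of torsion-freeness, uniformity of $V$, and the equality $d(G)=\dim(G)$ is really used.
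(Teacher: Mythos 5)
Your proposal is correct, and it follows the same overall strategy as the paper --- reduce the pro-$p$ statement to the preceding corollary about finite $p$-groups applied to a cofinal family of finite quotients --- but where the paper disposes of the entire reduction in one line by citing Proposition 1.10 of \cite{KS}, you carry it out explicitly. Your version of the reduction is sound: the choice of a characteristic open uniform $V$ with $\dim V=\dim G=d$, the observation that $V^{p^k}\le\Phi(G)$ forces $d(G/V^{p^k})=d$ for large $k$, and the identification of the elements of order dividing $p$ in $G/V^{p^k}$ with $V^{p^{k-1}}/V^{p^k}$ (which settles $|\Omega_1(G/V^{p^k})|=p^d$ whether one reads $\Omega_1$ as the set or the subgroup) are all correct, with the compactness-plus-torsion-freeness argument correctly isolating the only delicate point, namely that a witness $g_k$ of small order eventually falls into $V$, where injectivity of the $p$-th power map and the equality $V^{p^k}=\{v^{p^k}\mid v\in V\}$ for uniform groups \cite{DDMS} finish the job. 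The closing step $\bigcap_k \overline{G^p}V^{p^k}=\overline{G^p}$ is routine as you say. What the paper's route buys is brevity and a clean division of labour (the reduction is exactly the content of \cite[Proposition 1.10]{KS}); what yours buys is a self-contained proof that makes visible exactly where torsion-freeness, the uniformity of $V$, and the hypothesis $d(G)=\dim(G)$ are each used.
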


\begin{proof}
This follows from the previous corollary and Proposition 1.10 of \cite{KS}.
\end{proof}

For proving the case $k=p-1$ we need to use the concept of $i$-regular $p$-groups from the work of Bannuscher (see \cite{Ba81}).  We say that a finite $p$-group $G$ is $i$-regular if for all $x,y\in G$ 
$$x^{p^i}y^{p^i}=(xy)^{p^i}z,$$
where $z\in [\langle x,y\rangle, \langle x,y\rangle]^{p^i}$. We continue with the following lemma.

\begin{lem}
Let $G$ be a finite $p$-group  and  $\omega =x^{p^i} [y_1,\ldots ,y_{p-1}]^{p^{i-1}}[z_1,\ldots ,z_p]$ for some $i \in \N$ with $i\geq 2$. Then for a $\omega$-maximal $p$-group $G$ one has that
$|G:G^{p^i}|=|\Omega_{\{ i\}} (G)|$.
\end{lem}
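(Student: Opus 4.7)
The plan is to combine Theorem \ref{thm:thompson} with Hall's collection formula to establish that $G$ is $i$-regular in the sense of Bannuscher, and then invoke the identity $|G:G^{p^i}| = |\Omega_{\{i\}}(G)|$ that is known to hold for $i$-regular $p$-groups \cite{Ba81}.

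By Lemma \ref{swap}(ii), $\omega$ is interchangeable in $G$, so Theorem \ref{thm:thompson} yields
\[
\omega(G) = G^{p^i}\gamma_{p-1}(G)^{p^{i-1}}\gamma_p(G) \leq Z(G).
\]
Two consequences are crucial: $G$ has nilpotency class at most $p$ (since $\gamma_p(G) \leq Z(G)$), and $\gamma_{p-1}(G)^{p^{i-1}} \leq Z(G)$. I combine these to deduce $\gamma_p(G)^{p^{i-1}} = 1$ as follows: the central, hence abelian, subgroup $\gamma_p(G)$ is generated by commutators $[c, g]$ with $c \in \gamma_{p-1}(G)$ and $g \in G$, and for such elements $[c, g]^{p^{i-1}} = [c^{p^{i-1}}, g] = 1$, the first equality holding because $[c, g]$ is central (so commutes with $c$), and the second because $c^{p^{i-1}}$ is central.

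The main step is to verify $i$-regularity. Fix $x, y \in G$ and set $H = \langle x, y\rangle$. Since $G$ has class at most $p$, Hall's collection formula reduces to
\[
x^{p^i}y^{p^i} = (xy)^{p^i}\prod_{j=2}^{p} c_j(x,y)^{\binom{p^i}{j}},
\]
with $c_j(x, y) \in \gamma_j(H)$. For $2 \leq j \leq p-1$ the exponent $\binom{p^i}{j}$ is divisible by $p^i$, so $c_j(x,y)^{\binom{p^i}{j}} \in \gamma_j(H)^{p^i} \leq [H, H]^{p^i}$. The delicate term is $j = p$: here $\binom{p^i}{p}$ is only divisible by $p^{i-1}$, but $c_p(x, y) \in \gamma_p(H) \leq \gamma_p(G)$ and $\gamma_p(G)^{p^{i-1}} = 1$ by the previous paragraph, so this factor is trivial. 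Altogether $x^{p^i}y^{p^i}(xy)^{-p^i} \in [H, H]^{p^i}$, confirming $i$-regularity.

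The main obstacle is precisely the $j = p$ term in the collection formula; the extra factor $[y_1, \ldots, y_{p-1}]^{p^{i-1}}$ in $\omega$ is engineered so that Theorem \ref{thm:thompson} delivers $\gamma_{p-1}(G)^{p^{i-1}} \leq Z(G)$, which in turn forces $\gamma_p(G)^{p^{i-1}} = 1$ and annihilates this term. Once $i$-regularity is in hand, the desired equality $|G : G^{p^i}| = |\Omega_{\{i\}}(G)|$ follows directly from Bannuscher's theorem \cite{Ba81}.
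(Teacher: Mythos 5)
Your proof is correct and follows essentially the same route as the paper: Theorem \ref{thm:thompson} gives $\omega(G)\leq Z(G)$, this is converted into the vanishing of the problematic terms in the Hall--Petrescu collection formula to obtain $i$-regularity, and Bannuscher's theorem finishes the argument. The only (harmless) difference is in the middle step: the paper invokes \cite[Theorem 2.5]{FeGoJa} to deduce $[G,G]^{p^i}\gamma_p(G)^{p^{i-1}}\gamma_{p+1}(G)=1$, whereas you derive $\gamma_{p+1}(G)=1$ and $\gamma_p(G)^{p^{i-1}}=1$ by a direct commutator computation and absorb the weight-$j$ terms with $j\leq p-1$ into $[H,H]^{p^i}$, which the definition of $i$-regularity permits.
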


\begin{proof}
By Lemma \ref{swap}, $\omega$ is interchangeable and by Theorem \ref{thm:thompson} $\omega (G)\leq Z (G)$. That is
$$[G^{p^i},G][\gamma_{p-1}(G)^{p^{i-1}},G]\gamma_{p+1}(G)=1.$$
Notice that by  \cite[Theorem 2.5]{FeGoJa}
\begin{eqnarray*}
[G^{p^i},G] & \equiv & [G,G]^{p^i} \pmod{\gamma_{p+1}(G)} \text{ \ \ and} \\
{[}\gamma_{p-1}(G)^{p^{i-1}},G] & \equiv & \gamma_p(G)^{p^{i-1}}\pmod{\gamma_{p+1}(G)}.
\end{eqnarray*}
In particular we have
$$[G,G]^{p^i}\gamma_{p}(G)^{p^{i-1}}\gamma_{p+1}(G)=1.$$
Therefore by P. Hall collection formula \cite[Chap. III, Theorem 9.4]{Hu},  $G$ is $i$-regular. Hence, the lemma follows from  \cite[Satz 4]{Ba81}.
\end{proof}

\begin{teo}
Let $p$ be an odd prime, $i\geq 2$ and $G$ be a finite $p$-group . Then the following conditions are equivalent:
\begin{enumerate}
\item $\gamma_{p-1}(G)\leq G^{p^i}$,
\item $|G:G^{p^i}\gamma_{p-1}(G)|=|\Omega_{\{i\}}(G)|$.
\end{enumerate}
\end{teo}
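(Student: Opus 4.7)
The plan is to mirror the argument of Theorem~\ref{main_p5}, but replacing the simpler word $x^{p^i}[y_1,\ldots,y_{k+1}]$ (which for $k+1=p$ would leave the $\omega$-maximal subgroup at class $p$ and destroy regularity) with the interchangeable word
$$\omega = x^{p^i}[y_1,\ldots,y_{p-1}]^{p^{i-1}}[z_1,\ldots,z_p]$$
furnished by Lemma~\ref{swap}(ii); the hypothesis $i \geq 2$ enters precisely because this is what both Lemma~\ref{swap}(ii) and the preceding lemma require. The direction (1)$\Rightarrow$(2) follows from \cite{Wil} or \cite{GJ} exactly as in Theorem~\ref{main_p5}, since under (1) the subgroup $G^{p^i}\gamma_{p-1}(G)$ collapses to $G^{p^i}$.

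For the converse I would set $\mathcal{C} = \{H \leq G : |H:\omega(H)| \geq |G:\omega(G)|\}$, pick $K$ minimal in $\mathcal{C}$, and note that $K$ is then $\omega$-maximal. The preceding lemma gives $|K:K^{p^i}| = |\Omega_{\{i\}}(K)|$, and I would then run the chain
\begin{equation*}
\begin{split}
|G:\omega(G)| &\leq |K:\omega(K)| \leq |K:K^{p^i}| = |\Omega_{\{i\}}(K)| \\
&\leq |\Omega_{\{i\}}(G)| = |G:G^{p^i}\gamma_{p-1}(G)| \leq |G:\omega(G)|,
\end{split}
\end{equation*}
whose final inequality uses the containment $\omega(G) = G^{p^i}\gamma_{p-1}(G)^{p^{i-1}}\gamma_p(G) \leq G^{p^i}\gamma_{p-1}(G)$ and whose penultimate equality is hypothesis (2). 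All terms must therefore coincide.

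In particular $\omega(G) = G^{p^i}\gamma_{p-1}(G)$, so $\gamma_{p-1}(G) \leq G^{p^i}\gamma_{p-1}(G)^{p^{i-1}}\gamma_p(G)$. Since $\gamma_{p-1}(G)/\gamma_p(G)$ is abelian, the finite abelian $p$-group $A = \gamma_{p-1}(G)/(\gamma_{p-1}(G) \cap G^{p^i}\gamma_p(G))$ satisfies $A = A^{p^{i-1}}$; as $i \geq 2$ this forces $A = A^p$, hence $A = 1$ and $\gamma_{p-1}(G) \leq G^{p^i}\gamma_p(G)$. Passing to the finite $p$-group $M = G/G^{p^i}$, this reads $\gamma_{p-1}(M) = \gamma_p(M)$, so $\gamma_p(M) = [\gamma_p(M), M]$; nilpotency of $M$ forces $\gamma_p(M) = 1$, i.e.\ $\gamma_{p-1}(G) \leq G^{p^i}$.

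The main obstacle I anticipate is the residue $\gamma_{p-1}(G)^{p^{i-1}}\gamma_p(G)$ that the modified word $\omega$ leaves in its verbal subgroup, which prevents the direct analogue of Theorem~\ref{main_p5} from closing on the nose. Its removal requires precisely the $i \geq 2$ divisibility trick followed by the nilpotency of $G/G^{p^i}$; the indispensability of $i \geq 2$ here is consistent with the family of counterexamples supplied by Theorem~C in the case $i=1$.
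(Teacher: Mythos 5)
Your proposal is correct and follows essentially the same route as the paper: the same word $\omega=x^{p^i}[y_1,\ldots,y_{p-1}]^{p^{i-1}}[z_1,\ldots,z_p]$, the same minimal $\omega$-maximal subgroup $K$, the same chain of inequalities, and the same appeal to the preceding lemma for $|K:K^{p^i}|=|\Omega_{\{i\}}(K)|$. The only difference is that you spell out the final deduction from $\omega(G)=G^{p^i}\gamma_{p-1}(G)$ to $\gamma_{p-1}(G)\leq G^{p^i}$, which the paper leaves as a one-line assertion; your two-step argument (first removing the $p^{i-1}$-th powers via the abelian quotient and $i\geq 2$, then removing $\gamma_p$ via nilpotency of $G/G^{p^i}$) is a valid way to close that gap.
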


\begin{proof}
(1) implies (2) follows from  \cite{HL}, \cite{Wil} or \cite {GJ}.

For the converse put
$$\omega =x^{p^i} [y_1,\ldots ,y_{p-1}]^{p^{i-1}}[z_1,\ldots ,z_p]$$
and take 
$$\mathcal{C}=\{ H\leq G\mid |H:\omega (H)|\geq |G:\omega (G)|\}$$ 
and take $K$ a minimal element in $\mathcal{C}$ with respect to the inclusion. It follows that for all subgroups $H$ of $K$, $|K:\omega (K)|> |H:\omega (H)|$. That is, the subgroup $K$ is $\omega$-maximal. By Lemma \ref{swap} we have that the word $\omega$ is interchangeable in $G$. By the previous lemma one has that $|K:K^{p^i}|=|\Omega_{\{i\}}(K)|$. Then we have the following inequalities:

\begin{equation*}
\begin{split}
|G:\omega (G)| & \leq |K:\omega (K)|\leq |K:K^{p^i}| \\ 
& =|\Omega_{\{i\}}(K)| \leq  |\Omega_{\{i\}}(G)|= |G:G^{p^i}\gamma_{p-1}(G)|\\ 
&\leq |G:\omega (G)|.
\end{split}
\end{equation*}
Since the first and the last term are the same, all inequalities are equalities. Hence $G^{p^i}\gamma_{p-1}(G)=G^{p^i}\gamma_{p-1}(G)^{p^{i-1}}\gamma_p(G)$. This implies that $\gamma_{p-1}(G)\leq G^{p^i}$.
\end{proof}

We finish this section by giving a family of examples showing that Theorem B can not be improved. These examples prove Theorem C.

\begin{eje}
\label{ejemplo}
Take the $\Z_p$-lattice $M=\Span (x_1,\ldots ,x_{p-1})$ of rank $p-1$, and consider the following automorphism $\alpha$ of $M$:
\begin{eqnarray*}
\alpha (x_i) & = & x_{i+1} \ \ \ \text{if $i\leq p-2$} \\
\alpha (x_{p-1}) & = & x_1^{-1}\ldots x_{p-1}^{-1}.
\end{eqnarray*}
$\alpha$ is an automorphism of order $p$ that acts uniserially on $M$. Put $M_1=M$, and $M_r=[M_{r-1},\alpha]$. The semidirect product of $M$ and $\langle \alpha\rangle$ is the unique pro-$p$ group $H$ of maximal class, furthermore, $M$ is a maximal subgroup of $H$ and all elements of $H\setminus M$ are of order $p$ (see \cite[Section 7.4]{LeMc}). Therefore by \cite[Hilfssatz 10.9]{Hu} for $j=1,\ldots ,p-1$, one has that
$$1=(\alpha^j x)^p=x^p\sum_{i=1}^p[x,_i\alpha^j]^{ {p \choose i}}.$$
In particular 
$$x^p\sum_{i=1}^p[x,_i\alpha^j]^{ {p \choose i}}=1.$$
Put $N_r=M/M_r$ and consider $z$ a generator of $M_{r-1}/M_r$. Now, consider $G_r$ the non split extension
$$1\longrightarrow N_r\longrightarrow G_r\longrightarrow C_p\longrightarrow 1,$$
where $C_p=\langle y \rangle$ and the extension is defined by the identity $y^p=z$ and the action of $y$ in $N_r$ is given by $\alpha$. For any element in $x\in N_r$ and for $j=1,\ldots ,p-1$, by the previous identity and \cite[Hilfssatz 10.9]{Hu} we have 
$$(y^jx)^p=y^{jp}x^p\sum_{i=1}^p[x,_iy^j]^{ {p \choose i}}=z^j\neq 1.$$
Therefore $\Omega_1(G_r)=\Omega_1(M/M_r)$, in particular if $r\geq p$, it follows that $|\Omega_1(G_r)|=p^{p-1}$. On the other hand, since $\alpha$ acts uniserially on $M$, we have $|G_r:G_r^p\gamma_{p-1}(G_r)|=p^{p-1}$. But clearly the group $G_r$ does not satisfy the inclusion $\gamma_{p-1}(G_r)\leq G_r^p$.
\end{eje}

\end{document}